\newtheorem{theorem}{Theorem}[section]
\newtheorem{lemma}[theorem]{Lemma}
\newtheorem{proposition}[theorem]{Proposition}
\newtheorem{corollary}[theorem]{Corollary}
\theoremstyle{definition}
\newtheorem{definition}[theorem]{Definition}
\newtheorem{example}[theorem]{Example}
\theoremstyle{remark}
\numberwithin{equation}{section}
\begin{document}

\setcounter{page}{1}

\title[Hyperrigid generators in $C^*$-algebras]{Hyperrigid generators in $C^*$-algebras}

\author[P. Shankar]{P. Shankar}

\address{Indian Statistical Institute, Statistics and Mathematics Unit, 8th Mile, Mysore Road,
Bangalore, 560059, India.}
\email{shankarsupy@gmail.com, shankar\_vs@isibang.ac.in}

\subjclass[2010]{46L07, 46L52, 47A13, 47L80.}

\keywords{hyperrigidity, essential unitary operator, unital completely positive map, unique extension property}

\date{\today}

\begin{abstract}
In this article, we show that, if $S\in \mathcal{B}(H)$ is irreducible and essential unitary, then $\{S,SS^*\}$ is a hyperrigid generator for the unital $C^*$-algebra $\mathcal{T}$ generated by $\{S,SS^*\}$. We prove that, if $T$ is an operator in $\mathcal{B}(H)$ that generates an unital $C^*$-algebra $\mathcal{A}$ then $\{T,T^*T,TT^*\}$ is a hyperrigid generator for $\mathcal{A}$. As a corollary it follows that, if $T\in \mathcal{B}(H)$ is normal then $\{T,TT^*\}$ is hyperrigid generator for the unital $C^*$-algebra generated by $T$ and if $T\in \mathcal{B}(H)$ is unitary then $\{T\}$ is hyperrigid generator for the $C^*$-algebra generated by $T$. We show  that if $V\in \mathcal{B}(H)$ is an isometry (not unitary) that generates the $C^*$-algebra $\mathcal{A}$ then the minimal generating set $\{V\}$ is not hyperrigid for $\mathcal{A}$. 

\end{abstract} \maketitle

\section{Introduction}
The classical theorems of Korovkin impressed several mathematicians since their discovery for the simplicity and the potential. Positive approximation process play a fundamental role in the approximation theory and it appears in a very natural way in several problems dealing with the approximation of continuous functions and qualitative properties such as monotonicity, convexity, shape preservation and so on. 

Korovkin \cite{PPK60} made a assertion that, if a sequence of positive linear maps\linebreak $\phi_n:C[0,1]\rightarrow C[0,1]$, $n=1,2,3,...,$ has the property 
$$\lim\limits_{n\rightarrow \infty}||\phi_n(f_k)-f_k||=0, ~~k=0,1,2,$$
for the three functions $f_0(x)=1, f_1(x)=x, f_2(x)=x^2$ then 
$$\lim\limits_{n\rightarrow \infty}||\phi_n(f)-f||=0,~~\forall ~~ f\in C[0,1].$$
The set $\{1,x,x^2\}$ is called a \textit{Korovkin set} or \textit{test set}. Korovkin \cite{PPK60} showed that, the set $\{1,x\}$ is not a Korovkin set. Therefore, the set $\{1,x,x^2\}$ is a minimal set to satisfy the above assertion.

Korovkin's theorem generated considerable activity among researchers in approximation theory. The generalizations make essential use of the \textit{Choquet boundary} in one way or another. Saskin \cite{YAS66} proved a remarkable theorem. Let $G$ be a subset of $C(X)$ that separates points of compact Hausdorff space $X$ and contains constant function $1$. Then $G$ is a Korovkin set in $C(X)$ if and only if the Choquet boundary $\partial G$ of $G$ is $X$. That is $\partial G=X$.

Arveson \cite{WBA11} initiated the study of noncommutative approximation theory focusing on the question: How does one determine whether a set of generators of a $C^*$-algebra is \textit{hyperrigid}? Arveson \cite{WBA11} introduced a noncommutative counterpart of Korovkin set as follows:

\begin{definition}
A finite or countably infinite set $\mathcal{G}$ of generators of a $C^*$-algebra $\mathcal{A}$ is said to be \textit{hyperrigid} if for every faithful representation $\mathcal{A}\subseteq \mathcal{B}(H)$ of $\mathcal{A}$ on a Hilbert space $H$ and every sequence of unital completely positive (UCP) maps $\phi_n:\mathcal{B}(H)\rightarrow \mathcal{B}(H)$, $n=1,2,...,$
$$\lim_{n\rightarrow \infty}||\phi_n(g)-g||=0,~\forall~ g\in \mathcal{G} \Longrightarrow \lim_{n\rightarrow \infty}||\phi_n(a)-a||=0,~\forall~ a\in \mathcal{A}. $$
\end{definition}

Note that, a set $\mathcal{G}$ is hyperrigid if and only if $\mathcal{G}\cup \mathcal{G}^*$ is hyperrigid if and only if the linear span of $\mathcal{G}$ is hyperrigid. If $\mathcal{A}$ is unital, then $\mathcal{G}$ is hyperrigid if and only if $\mathcal{G}\cup \{1\}$ is hyperrigid \cite[Proposition 2.1]{GS17}.

The following characterization of hyperrigid operator systems due to Arveson \cite{WBA11} is more of a workable definition of hyperrigidity of  operator systems.

\begin{theorem}\cite[Theorem 2.1]{WBA11}\label{hypuep}
Let $\mathcal{S}$ be a separable operator system generating the $C^*$-algebra $\mathcal{A}=C^*(\mathcal{S})$ then $\mathcal{S}$ is hyperrigid if and only if every nondegenerate representation $\pi:\mathcal{A}\rightarrow \mathcal{B}(H)$  on a separable Hilbert space, $\pi{|_\mathcal{S}}$ has the unique extension property in the sense that the only unital completely positive (UCP) map $\phi:\mathcal{A}\rightarrow \mathcal{B}(H)$ that satisfies $\phi{|_\mathcal{S}}=\pi{|_\mathcal{S}}$ is $\phi=\pi$ itself.
\end{theorem}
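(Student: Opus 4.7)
The plan is to prove the two implications of the biconditional separately.

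\emph{Forward direction.} Suppose $\mathcal{S}$ is hyperrigid and let $\pi\colon \mathcal{A}\to \mathcal{B}(H)$ be a nondegenerate representation with $\phi\colon \mathcal{A}\to \mathcal{B}(H)$ a UCP map satisfying $\phi|_{\mathcal{S}}=\pi|_{\mathcal{S}}$. I first reduce to the case of faithful $\pi$ by replacing the pair $(\pi,\phi)$ by $(\pi\oplus\rho,\,\phi\oplus\rho)$ for some faithful representation $\rho$ of $\mathcal{A}$: the sum is again a UCP map agreeing with the representation on $\mathcal{S}$, and equality of the sums recovers $\phi=\pi$. With $\pi$ faithful, I identify $\mathcal{A}$ with $\pi(\mathcal{A})\subseteq \mathcal{B}(H)$ and use Arveson's extension theorem to extend $\phi$ to a UCP map $\widetilde{\phi}\colon\mathcal{B}(H)\to\mathcal{B}(H)$. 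Feeding the constant sequence $\phi_n:=\widetilde{\phi}$ into the hyperrigidity hypothesis (for which $\phi_n(g)=g$ holds identically on $\mathcal{S}$) forces $\widetilde{\phi}(a)=a$ for every $a\in\mathcal{A}$, hence $\phi=\pi$.

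\emph{Converse.} Assume every nondegenerate representation on a separable Hilbert space has the unique extension property upon restriction to $\mathcal{S}$, and suppose for contradiction that $\mathcal{S}$ is not hyperrigid. Since $\mathcal{S}$, and hence $\mathcal{A}$, is separable, I can find a faithful representation $\mathcal{A}\subseteq\mathcal{B}(H)$ with $H$ separable, UCP maps $\phi_n\colon\mathcal{B}(H)\to\mathcal{B}(H)$ satisfying $\phi_n(g)\to g$ in norm for each $g\in\mathcal{S}$, an element $a_0\in\mathcal{A}$, and unit vectors $\xi_n\in H$ with
\[
\|\phi_n(a_0)\xi_n - a_0\xi_n\|\;\ge\;\varepsilon
\]
along a subsequence. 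Fix a free ultrafilter $\omega$ on $\mathbb{N}$ and pass to the Hilbert-space ultrapower $H_\omega$. The diagonal embedding yields a nondegenerate representation $\pi_\omega\colon\mathcal{A}\to\mathcal{B}(H_\omega)$, while the sequence $(\phi_n|_{\mathcal{A}})$ aggregates into a UCP map $\phi_\omega\colon\mathcal{A}\to\mathcal{B}(H_\omega)$ via $\phi_\omega(a)[\eta_n]_\omega=[\phi_n(a)\eta_n]_\omega$, and the two maps agree on $\mathcal{S}$. After restricting $\pi_\omega$ to a separable invariant subspace containing $[\xi_n]_\omega$ (and augmented by a Stinespring dilation of $\phi_\omega$ so that the corresponding compression of $\phi_\omega$ is still a UCP map agreeing with the restricted representation on $\mathcal{S}$), the unique extension property hypothesis forces equality on all of $\mathcal{A}$. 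Evaluating at $[\xi_n]_\omega$ then yields $\lim_\omega \|\phi_n(a_0)\xi_n-a_0\xi_n\|=0$, contradicting the lower bound above.

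\emph{Main obstacle.} The forward direction is essentially a one-line consequence of Arveson's extension theorem, but the converse is the technically serious part. The crux is producing, from the asymptotic data $(\phi_n)$, a single UCP map whose codomain can be aligned with a representation of $\mathcal{A}$ on a separable Hilbert space, while retaining enough norm-level information to witness the failure of convergence at $a_0$. A pointwise weak-$*$ cluster argument is too weak because weak limits do not accommodate the moving vectors $\xi_n$; the ultrapower preserves the vector-level witness but lands in the non-separable $H_\omega$. The delicate bookkeeping is therefore to trim $H_\omega$ back to a separable $\pi_\omega(\mathcal{A})$-invariant subspace in such a way that the compression of $\phi_\omega$ remains a UCP map agreeing with the restricted representation on $\mathcal{S}$ and still detects the $\varepsilon$-gap at $a_0$.
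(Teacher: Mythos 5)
The paper itself offers no proof of this statement: it is quoted as Arveson's Theorem 2.1 of \cite{WBA11} and used as a black box, so your argument has to stand on its own. Your forward direction does stand: passing to $(\pi\oplus\rho,\phi\oplus\rho)$ with $\rho$ faithful, extending by Arveson's extension theorem, and feeding the constant sequence into the hyperrigidity hypothesis is the standard easy half and is correct.

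The converse, however, has a genuine gap at exactly the step you flag as ``delicate bookkeeping,'' and your parenthetical fix does not close it. First, the opening claim that a failure of hyperrigidity can be witnessed on a \emph{separable} $H$ is unjustified (the definition quantifies over all faithful representations, and there is no evident way to push the maps $\phi_n\colon\mathcal{B}(H)\to\mathcal{B}(H)$ down to a separable subspace); fortunately it is also unnecessary, since your ultrapower construction only uses $\phi_n|_{\mathcal{A}}$ and works for arbitrary $H$. Second, and more seriously: if $H_0\subseteq H_\omega$ is merely a separable $\pi_\omega(\mathcal{A})$-reducing subspace containing $\xi:=[\xi_n]_\omega$, then applying the unique extension property to $\rho:=\pi_\omega|_{H_0}$ and the compression $\psi(a):=P_{H_0}\phi_\omega(a)|_{H_0}$ only yields $P_{H_0}\bigl(\phi_\omega(a_0)-\pi_\omega(a_0)\bigr)\xi=0$; the vector $\phi_\omega(a_0)\xi$ may have a large component orthogonal to $H_0$, so no contradiction with the $\varepsilon$-gap follows. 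The proposed ``augmentation by a Stinespring dilation of $\phi_\omega$'' does not address this, because the issue is not whether the compression is UCP and agrees with $\rho$ on $\mathcal{S}$ (it automatically is and does), but whether $\psi=\rho$ recovers the vector-level identity $\phi_\omega(a_0)\xi=\pi_\omega(a_0)\xi$. The repair is concrete: since $\mathcal{A}$ is separable and $\phi_\omega$ is contractive, choose $H_0$ separable and invariant under \emph{both} all $\pi_\omega(a)$ and all $\phi_\omega(a)$, $a\in\mathcal{A}$ (iterate a countable dense subset of $\mathcal{A}$ on $\xi$ and take the closed span); then $\psi(a)\eta=\phi_\omega(a)\eta$ for $\eta\in H_0$, so $\psi=\rho$ gives $\lim_\omega\|\phi_n(a_0)\xi_n-a_0\xi_n\|=0$, contradicting the lower bound. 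With that modification (and dropping the spurious separable-$H$ reduction), your ultrapower route becomes a legitimate self-contained proof, packaged differently from Arveson's published treatment, which establishes the theorem through a chain of equivalent conditions rather than an ultraproduct argument.
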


The interesting examples of hyperrigid generators are obtained by a direct application of the above criterion. Arveson \cite{WBA11} established the noncommutative strengthening of a classical approximation-theoretic result of Korovkin.

\begin{theorem}\cite[Theorem 3.1]{WBA11}
Let $X\in B(H)$ be a self adjoint operator with atleast 3 points in its spectrum and let $\mathcal{A}$ be the $C^*$-algebra generated by $X$. Then
\begin{itemize}
\item[(i)] $\mathcal{G}=\{X,X^2\}$ is a hyperrigid generator for $\mathcal{A}$, while

\item[(ii)] $\mathcal{G}_0=\{X\}$ is not hyperrigid generator for $\mathcal{A}$.
\end{itemize}
\end{theorem}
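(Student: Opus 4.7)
The plan is to apply the unique-extension-property criterion of Theorem \ref{hypuep} to both parts. I will write $\mathcal{S} = \mathrm{span}\{1, X, X^2\}$ for the operator system generated by $\mathcal{G}$ and $\mathcal{S}_0 = \mathrm{span}\{1, X\}$ for the one generated by $\mathcal{G}_0$. For (i) I would verify UEP for every nondegenerate representation, while for (ii) I would produce a single representation $\pi$ together with a UCP extension of $\pi|_{\mathcal{S}_0}$ that does not coincide with $\pi$.

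For (i), let $\pi : \mathcal{A} \to \mathcal{B}(K)$ be any nondegenerate representation on a separable Hilbert space and $\phi : \mathcal{A} \to \mathcal{B}(K)$ a UCP map with $\phi|_{\mathcal{S}} = \pi|_{\mathcal{S}}$. Since $X = X^*$ and $\pi$ is multiplicative,
$$
\phi(X^*X) = \phi(X^2) = \pi(X^2) = \pi(X)^2 = \phi(X)^*\phi(X),
$$
so $\phi$ achieves equality in the Kadison--Schwarz inequality at $X$. I would then invoke the standard fact that this places $X$ in the multiplicative domain of $\phi$; by multiplicativity on powers, $\phi(X^n) = \phi(X)^n = \pi(X)^n = \pi(X^n)$ for every $n \geq 0$, and continuity extends the agreement to the whole $C^*$-algebra $\mathcal{A} = \overline{\mathrm{span}}\{1, X, X^2, \ldots\}$.

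For (ii), the three-spectral-points hypothesis is exactly what makes the obstruction available. I would pick $a < b < c$ in $\sigma(X)$ and set $\lambda = (c-b)/(c-a) \in (0,1)$, so that $b = \lambda a + (1-\lambda)c$. Under the Gelfand identification $\mathcal{A} \cong C(\sigma(X))$, take $\pi$ to be evaluation at $b$, which is a nondegenerate one-dimensional representation, and define the UCP map $\phi : \mathcal{A} \to \mathbb{C}$ by $\phi(f) = \lambda f(a) + (1-\lambda) f(c)$, a convex combination of characters and hence automatically unital and completely positive. Then $\phi(X) = \lambda a + (1-\lambda)c = b = \pi(X)$, so $\phi|_{\mathcal{S}_0} = \pi|_{\mathcal{S}_0}$, whereas by strict convexity of $t \mapsto t^2$,
$$
\phi(X^2) = \lambda a^2 + (1-\lambda)c^2 > b^2 = \pi(X^2),
$$
so $\phi \neq \pi$. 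Theorem \ref{hypuep} therefore rules out hyperrigidity of $\mathcal{G}_0$.

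The only step that is not formal bookkeeping is recognising in (i) that the hypothesis $\phi(X^2) = \phi(X)^2$ is precisely the equality case of Schwarz needed to force $X$ into the multiplicative domain of $\phi$; the spectral-calculus identification and the Jensen-type strict inequality used in (ii) are then routine, and together they explain why $\{X, X^2\}$ is the ``correct'' minimal Korovkin-type set.
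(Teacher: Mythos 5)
Your proposal is correct. Note that the paper does not prove this statement at all --- it is quoted verbatim from Arveson \cite[Theorem 3.1]{WBA11} --- so there is no in-paper proof to compare against; your argument is essentially Arveson's original one. For (i), your observation that $\phi(X^2)=\pi(X^2)=\pi(X)^2=\phi(X)^2$ realizes the equality case of the Schwarz inequality and puts $X$ (being self-adjoint, both conditions $\phi(X^*X)=\phi(X)^*\phi(X)$ and $\phi(XX^*)=\phi(X)\phi(X)^*$ hold) into the multiplicative domain of $\phi$ is exactly the mechanism that the paper itself deploys, in unpacked Stinespring form, in its proof of Theorem \ref{R1}: there the author writes $\phi=V^*\sigma(\cdot)V$ and computes $V^*\sigma(T)^*(1-VV^*)\sigma(T)V=0$ to get invariance of $VH$, which is precisely the standard proof of the multiplicative-domain fact you invoke. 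So the two routes are the same argument in different packaging, and your version is the shorter one; the only thing to keep in mind is that the multiplicative-domain lemma needs the full two-sided condition, which self-adjointness of $X$ supplies for free (whereas for a general $T$ one needs both $T^*T$ and $TT^*$, which is exactly why Theorem \ref{R1} uses the larger set $\{T,T^*T,TT^*\}$). For (ii), your convex-combination-of-evaluations counterexample $\phi=\lambda\delta_a+(1-\lambda)\delta_c$ against $\pi=\delta_b$, with strict convexity of $t\mapsto t^2$ giving $\phi\neq\pi$, is the standard (and Arveson's) argument, and it correctly isolates the role of the three-points-in-the-spectrum hypothesis; it tacitly uses the Gelfand picture $\mathcal{A}\cong C(\sigma(X))$, i.e.\ that one works with the unital $C^*$-algebra generated by $X$, which is the convention under which Theorem \ref{hypuep} is being applied throughout the paper, so this is a harmless normalization rather than a gap.
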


\begin{theorem}\cite[Theorem 3.3]{WBA11}\label{hypiso}
Let $V\in \mathcal{B}(H)$ be an isometry that generates a $C^*$-algebra $\mathcal{A}$. Then $\mathcal{G}=\{V, VV^*\}$ is hyperrigid generator for $\mathcal{A}$.
\end{theorem}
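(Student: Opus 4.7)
The plan is to apply Arveson's characterization (Theorem~\ref{hypuep}), which reduces the hyperrigidity of $\mathcal{G} = \{V, VV^*\}$ to the unique extension property of the operator system $\mathcal{S} = \mathrm{span}\{1, V, V^*, VV^*\}$ inside every nondegenerate representation of $\mathcal{A}$ on a separable Hilbert space. Thus I would fix such a representation $\pi : \mathcal{A} \to \mathcal{B}(K)$ and a UCP map $\phi : \mathcal{A} \to \mathcal{B}(K)$ with $\phi|_{\mathcal{S}} = \pi|_{\mathcal{S}}$, and aim to conclude $\phi = \pi$.

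The heart of the argument is the \emph{multiplicative domain} of $\phi$, namely
\[
\mathcal{M}_\phi = \{a \in \mathcal{A} : \phi(a^*a) = \phi(a)^*\phi(a) \text{ and } \phi(aa^*) = \phi(a)\phi(a)^*\},
\]
a $C^*$-subalgebra of $\mathcal{A}$ on which $\phi$ restricts to a $*$-homomorphism. The key step is to show that $V \in \mathcal{M}_\phi$. For the first Kadison--Schwarz equality, the isometry identity $V^*V = 1$ together with $\phi(1) = 1$ and $\phi(V) = \pi(V)$ (itself an isometry) give $\phi(V^*V) = 1 = \pi(V)^*\pi(V) = \phi(V)^*\phi(V)$. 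For the second equality, the crucial role of the generator $VV^*$ appears: $\phi(VV^*) = \pi(VV^*) = \pi(V)\pi(V)^* = \phi(V)\phi(V)^*$.

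Once $V \in \mathcal{M}_\phi$, the $*$-closedness of the multiplicative domain forces $V^* \in \mathcal{M}_\phi$ as well, so $\mathcal{M}_\phi$ contains $C^*(V) = \mathcal{A}$. Consequently $\phi$ is itself a $*$-homomorphism on $\mathcal{A}$, and since $\phi$ and $\pi$ are $*$-homomorphisms agreeing on the generator $V$, they coincide on all of $\mathcal{A}$. I do not foresee a real technical obstacle here; the only delicate point is conceptual, namely recognizing why the second generator $VV^*$ is indispensable. Without it, only the one Schwarz equality $\phi(V^*V) = \phi(V)^*\phi(V)$ is automatic, which at best places $V$ in the one-sided multiplicative domain and is insufficient when $VV^* \neq 1$, i.e.\ when $V$ is a nonunitary isometry; including $VV^*$ as a generator is precisely what promotes one-sided multiplicativity to full multiplicativity on $\mathcal{A}$.
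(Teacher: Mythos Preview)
Your proposal is correct. Note, however, that in this paper Theorem~\ref{hypiso} is simply quoted from Arveson~\cite{WBA11} and not re-proved; the paper instead establishes the more general Theorem~\ref{R1} (that $\{T, T^*T, TT^*\}$ is hyperrigid for an arbitrary generator $T$), from which the isometry case follows immediately since $V^*V = 1$ already lies in the operator system.

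The paper's argument for Theorem~\ref{R1} goes through the minimal Stinespring dilation $\phi(\cdot) = W^*\sigma(\cdot)W$: the identity $\phi(T^*T) = \phi(T)^*\phi(T)$ is used to show that $\sigma(T)$ leaves $WH$ invariant, and $\phi(TT^*) = \phi(T)\phi(T)^*$ is then used to show that $\sigma(T)^*$ does as well, so minimality makes $W$ unitary and $\phi$ a representation agreeing with $\pi$ on the generator. Your multiplicative-domain route is the abstract repackaging of precisely these two computations --- this is Choi's characterization of the multiplicative domain --- so the proofs coincide at the level of the key Schwarz equalities. What your version buys is economy and clarity: it avoids writing out the dilation explicitly and makes transparent why the generator $VV^*$ is indispensable, namely to promote $V$ from the one-sided (left) multiplicative domain to the full two-sided one, which is exactly the point you highlight at the end.
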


Arveson \cite{WBA11} essentially used the noncommutative Choquet boundary. He found the hyperrigid generator for compact operators $\mathcal{K}(H)$. 

\begin{theorem}\cite[Theorem 8.1]{WBA11}
Let $V\in \mathcal{B}(H)$ be an irreducible compact operator with cartesian decomposition $V=A+iB$, where $A$ is a finite rank positive operator and $B$ is essential with Ker$B=\{0\}$. Then
\begin{itemize}
\item[(i)] $\mathcal{G}=\{V,V^2\}$ is hyperrigid generator for $C^*$-algebra $\mathcal{K}(H)$ of compact operators. In particular every sequence of unital completely positive maps $\phi_n:\mathcal{B}(H)\rightarrow \mathcal{B}(H)$ for which 
$$\lim_{n\rightarrow \infty} ||\phi_n(V)-V||=\lim_{n\rightarrow \infty}||\phi_n(V^2)-V^2||=0, $$
one has 
$$\lim_{n\rightarrow \infty}||\phi_n(K)-K||=0 $$
for every compact operator $K\in \mathcal{B}(H)$.

\item[(ii)] The smaller generating set $\mathcal{G}_0=\{V\}$ of $\mathcal{K}(H)$ is not hyperrigid.
\end{itemize}
\end{theorem}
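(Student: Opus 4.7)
The plan is to apply the unique-extension-property criterion of Theorem~\ref{hypuep} to the operator system $\mathcal{S} = \operatorname{span}\{I, V, V^*, V^2, (V^*)^2\}$, which generates $C^*(V)$; under the hypotheses (irreducible compact $V$ with the specified cartesian structure) this $C^*$-algebra is the unitization $\mathcal{K}(H) + \mathbb{C} I$. Since every nondegenerate representation of $\mathcal{K}(H)$ on a separable Hilbert space is an amplification of the identity representation, one reduces to showing the following: if $\pi$ is the identity representation on $H$ and $\phi : C^*(V) \to \mathcal{B}(H)$ is UCP with $\phi|_\mathcal{S} = \pi|_\mathcal{S}$, then $\phi = \pi$.

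For Part~(i), the strategy I would pursue is a Schwarz-inequality analysis built on the cartesian decomposition $V = A + iB$. The equality $\phi(V) = V$ gives $\phi(A) = A$ and $\phi(B) = B$, while $\phi(V^2) = V^2$ gives $\phi(A^2 - B^2) = A^2 - B^2$ and $\phi(AB + BA) = AB + BA$. The Schwarz inequality for UCP maps then yields $\phi(A^2) \geq A^2$ and $\phi(B^2) \geq B^2$, and combining with the preceding equality forces the common value $P := \phi(A^2) - A^2 = \phi(B^2) - B^2$ to be a positive operator. The crux is to show $P = 0$: once this holds, Schwarz equality places $A$ and $B$ in the multiplicative domain of $\phi$, hence also $V = A + iB$, and then $\phi$ agrees with $\pi$ on the $C^*$-algebra they generate, namely $\mathcal{K}(H) + \mathbb{C}I$. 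To drive $P$ to zero, I would apply Schwarz to the selfadjoint combinations $A + tB$ and also to $(A + itB)^*(A + itB)$ for $t \in \mathbb{R}$, producing the operator inequality $(1 + t^2) P + t(iR) \geq 0$ for all $t$, where $R := \phi(AB - BA) - (AB - BA)$ is skew-hermitian. The finite-rank hypothesis on $A$ lets one localize $P$ to a finite-dimensional corner adapted to $\operatorname{ran} A$, where spectral analysis gives $P = 0$ there, and the essentiality of $B$ together with $\ker B = \{0\}$ is then used to propagate this local vanishing across all of $H$.

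For Part~(ii), the plan is to exhibit a concrete UCP map $\phi : C^*(V) \to \mathcal{B}(H)$ with $\phi(V) = V$ yet $\phi \neq \operatorname{id}$; by Theorem~\ref{hypuep} this refutes UEP, and hence hyperrigidity, for the singleton system $\{V\}$. A natural construction is to choose a Stinespring dilation $\phi(\cdot) = W^*\rho(\cdot)W$ in which $\rho(V)$ has a nontrivial off-diagonal block with respect to the subspace $W(H)$, calibrated so that the compression still reproduces $V$ while perturbing a witness such as $V^*V$ or $V^2$; the finite-rank structure of $A$ supplies the finite-dimensional ``slack'' on which the off-diagonal block can live, and the essentiality of $B$ with $\ker B = \{0\}$ guarantees that the perturbation really moves the witness. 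The main obstacle I anticipate is the $P = 0$ step in Part~(i): this is the only place where the specific cartesian hypotheses are genuinely consumed, and without the finite rank of $A$ one loses the localization, while without the essential, injective character of $B$ one loses the propagation --- once that step is carried out, the multiplicative-domain machinery closes the argument quickly, and the construction of $\phi$ for Part~(ii) is a comparatively routine adaptation of the Stinespring picture to the same cartesian data.
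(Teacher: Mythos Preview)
The paper does not contain a proof of this statement. The theorem appears in the Introduction as a quoted result from Arveson's paper \cite[Theorem~8.1]{WBA11}; it is background motivation, not something the present paper establishes. Consequently there is no ``paper's own proof'' against which your proposal can be compared.

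That said, a brief remark on your sketch: the overall architecture for Part~(i) --- reduce to the identity representation via the structure of $\mathcal{K}(H)$, then use Schwarz and the multiplicative domain --- is broadly in the spirit of Arveson's methods, but your outline of the $P=0$ step is where the real work lies and your description (``localize to a finite-dimensional corner, then propagate by essentiality of $B$'') is too schematic to be assessed. In Arveson's actual argument the hypotheses on $A$ (finite rank, positive) and $B$ (essential, injective) are consumed in a rather delicate way that is not captured by the inequality $(1+t^2)P + t(iR) \geq 0$ alone; you would need to say precisely how finite rank of $A$ forces the vanishing and why essentiality of $B$ is the right propagation mechanism. For Part~(ii), your plan to build a nontrivial UCP extension via a Stinespring dilation with off-diagonal slack is reasonable in outline, but again the details of how the cartesian data furnish the required perturbation are not spelled out. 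If you intend to supply an independent proof, these are the points that need to be filled in.
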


Let $S=(S_1,...,S_d)$ denote the compression of the $d$-shift to the complement of a homogeneous ideal $I$ of $\mathbb{C}[z_1,...,z_d]$. Following the remark above, Kennedy and Shalit \cite[Theorem 4.12]{KS15} (see also \cite{KS16}) proved that, if homogeneous ideals are sufficiently non-trivial then $S$ is essentially normal if and only if it is hyperrigid as the generating set of a $C^*$-algebra.

The main purpose of this paper is to find the minimal hyperrigid generators for $C^*$-algebras. We show that, if $S\in \mathcal{B}(H)$ is irreducible and is an essential unitary and $\mathcal{G}=\{S,SS^*\}$. Let $\mathcal{T}=C^*(\mathcal{G})$ be the unital $C^*$-algebra generated by $\mathcal{G}$. Then $\mathcal{G}$ is a hyperrigid generator for $\mathcal{T}$. We prove that, if $T$ is an operator in $\mathcal{B}(H)$ that generates a unital $C^*$-algebra $\mathcal{A}$ and $\mathcal{G}=\{T,T^*T,TT^*\}$, then $\mathcal{G}$ is a hyperrigid generators for the unital $C^*$-algebra $\mathcal{A}$. As a corollary it follows that, if $T$ be a normal operator in $\mathcal{B}(H)$ that generate a unital $C^*$-algebra $\mathcal{A}$ and let $\mathcal{G}=\{T, TT^*\}$. Then $\mathcal{G}$ is hyperrigid generator for unital $C^*$-algebra $\mathcal{A}$. If $T$ be an unitary operator in $\mathcal{B}(H)$ that generate a  $C^*$-algebra $\mathcal{A}$ and let $\mathcal{G}=\{T\}$. Then $\mathcal{G}$ is hyperrigid generator for $C^*$-algebra $\mathcal{A}$. We show that, if $V\in \mathcal{B}(H)$ be an isometry (not unitary) that generates a $C^*$-algebra $\mathcal{A}$. Then the minimal generating set $\mathcal{G}_0=\{V\}$ is not hyperrigid generator for $C^*$-algebra $\mathcal{A}$.

\section{Preliminaries}
Here, we recall the necessary definitions, conventions and notations.

Let $H$ be a separable complex Hilbert space and let $\mathcal{B}(H)$ be the set of all bounded linear operators on $H$. A \textit{operator system} $\mathcal{S}$ in a $C^*$-algebra $\mathcal{A}$ is a self-adjoint linear subspace of $\mathcal{A}$ containing the identity of $\mathcal{A}$. A \textit{operator algebra} $\mathcal{A}_0$ in a $C^*$-algebra $\mathcal{A}$ is a unital subalgebra of $\mathcal{A}$.  Given a linear map $\phi$ from a $C^*$-algebra $\mathcal{A}$  into a $C^*$-algebra $\mathcal{B}$ we can define a family of maps $\phi_n:M_n(\mathcal{A})\rightarrow M_n(\mathcal{B})$ given by  $\phi_n([a_{ij}])=[\phi(a_{ij})]$, $n\in \mathbb{N}$. We say that $\phi$ is \textit{completely bounded} (CB) if $||\phi||_{\text{CB}}=\sup_{n\geq 1}||\phi_n||< \infty$. We say that $\phi$ is \textit{completely contractive} (CC) if $||\phi||_{\text{CB}} \leq 1$ and that $\phi$ is \textit{completely isometric} if $\phi_n$ is isometric for all $n\geq 1$. We say that $\phi$ is \textit{completely positive} (CP) if $\phi_n$ is positive for all $n\geq 1$, and that $\phi$ is \textit{unital completely positive} (UCP) if in addition $\phi(1)=1$.

\begin{definition}
Let $\mathcal{S}$ be an operator system that  generates a $C^*$-algebra $\mathcal{A}$. A unital completely positive map $\phi:\mathcal{S}\rightarrow \mathcal{B}(H)$ is said to have the \textit{unique extension property} if it has a unique extension to a UCP map $\widetilde{\phi}:\mathcal{A}\rightarrow \mathcal{B}(H)$
\end{definition}

The boundary representations of $\mathcal{A}$ for $\mathcal{S}$, which were introduced by Arveson \cite{WBA69}, are precisely the irreducible representations $\pi:\mathcal{A}\rightarrow \mathcal{B}(H)$ with the property that the restriction $\pi_{|_\mathcal{S}}$ has the unique extension property. The existence of boundary representations was an open question for some time. Arveson \cite{WBA08} proved the existence of boundary representions for separable $C^*$-algebras. Davidson and Kennedy \cite{DK15} settled the existence of boundary representations for general $C^*$-algebras. 

Arveson \cite{WBA11} tried to prove the non-commutative analogue of Saskin's theorem \cite{YAS66} using theory of noncommutative Choquet boundary for unital completely positive maps on $C^*$-algebras and noncommutative counterpart of the Korovkin's set which is the hyperrigid set. Arveson \cite{WBA11} proved that if the separable operator system is hyperrigid in the $C^*$-algebra then every irreducible representation of $C^*$-algebra is a boundary representation for the operator system. The converse to this result is called \textit{hyperrigidity conjecture}: that is, if every irreducible representation of a $C^*$-algebra is a boundary representation for a separable operator system then the operator system is hyperrigid.

Arveson \cite{WBA11} showed that the hyperrigidity conjecture is true for $C^*$-algebras with countable spectrum. Kleski \cite{CK14} established the hyperrigidity conjecture for all type-I $C^*$-algebras with additional assumptions on the co-domain. Davidson and Kennedy\cite{DK16} proved the conjecture for function systems. Clouatre \cite{RC18} established the hyperrigidity conjecture with assumption of unperforated. The hyperrigidity conjecture is still open for general $C^*$-algebras. Namboodiri, Pramod, Shankar and Vijayarajan \cite{NPSV18} approached the hyperrigidity conjecture with weaker notions. They got the partial answers.

\section{Essential Unitary and hyperrigidity}
Let $\mathcal{B}(H)$ be the algebra of bounded linear operators on a separable complex Hilbert space $H$ and $\mathcal{K}(H)$ ideal of compact operators on $H$.\linebreak Let $\pi:\mathcal{B}(H)\rightarrow \mathcal{B}(H)/\mathcal{K}(H)$ be the natural surjection onto the Calkin algebra $\mathcal{B}(H)/\mathcal{K}(H)$. The operator $T\in \mathcal{B}(H)$ is called  essentially normal if $\pi(T)$ is normal in the Clakin algebra, or equivalently, $T^*T-TT^*$ is compact. The operator $S\in \mathcal{B}(H)$ is called essentially unitary if $\pi(S)$ is unitary in the Clakin algebra, or equivalently, $I-S^*S$ and $I-SS^*$ are compact. The above definitions can be found in \cite{BDF73}.

Here, we will have the following assumptions to proceed. Let $S$ be a irreducible and essential unitary but not unitary operator in $\mathcal{B}(H)$ and let  $\mathcal{G}=\{S,SS^*\}$. Let $\mathcal{S}$ be a operator system generated by $\mathcal{G}$. Let $\mathcal{T}=C^*(\mathcal{G})$ be the unital $C^*$-algebra generated by $\mathcal{G}$. The unital $C^*$-algebra $\mathcal{T}$ contains the compact operators $\mathcal{K}(H)$.

A representation $\rho:\mathcal{T}\rightarrow \mathcal{B}(H)$ is said to be singular representation if it annihilates the compact operators $\mathcal{K}(H)$. 

\begin{lemma}\label{inva}
Let $\rho:\mathcal{T}\rightarrow \mathcal{B}(H)$ be a representation, and let $\pi:\mathcal{T}\rightarrow \mathcal{B}(K)$ be a representation such that $\pi{|_{\mathcal{S}}}$ is a dilation of  $\rho{|_{\mathcal{S}}}$. Then the subspace $H$ is coinvariant for $\pi(\mathcal{S})$.
\end{lemma}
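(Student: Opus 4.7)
The plan is to exploit the unique nontrivial algebraic relation that the operator system $\mathcal{S}=\mathrm{span}\{I,S,S^*,SS^*\}$ provides, namely the factorization $SS^*=S\cdot S^*$, together with the dilation identity $\rho(s)=P_H\pi(s)|_H$ for $s\in\mathcal{S}$ and the fact that $\rho$ is a $*$-homomorphism. The point is to force an off-diagonal block of $\pi(S)$ with respect to $K=H\oplus H^\perp$ to vanish.

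Concretely, I would write $\pi(S)=\left(\begin{smallmatrix}A & B\\ C & D\end{smallmatrix}\right)$ in this decomposition, so that $\pi(S^*)=\left(\begin{smallmatrix}A^* & C^*\\ B^* & D^*\end{smallmatrix}\right)$. The dilation identity applied to $S$ gives $\rho(S)=A$, while applied to $SS^*$ (using that $\pi$ is multiplicative) it gives
\[ \rho(SS^*)\;=\;P_H\pi(S)\pi(S^*)|_H\;=\;AA^*+BB^*. \]
On the other hand, multiplicativity of $\rho$ produces $\rho(SS^*)=\rho(S)\rho(S)^*=AA^*$. Equating the two expressions for $\rho(SS^*)$ forces $BB^*=0$, hence $B=0$. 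Therefore $\pi(S)H^\perp\subseteq H^\perp$, equivalently $\pi(S)^*H=\pi(S^*)H\subseteq H$, which is the coinvariance of $H$ for $\pi(\mathcal{S})$ asserted in the lemma.

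The heart of the argument is the recognition that, among the finitely many products formable from the spanning elements $I,S,S^*,SS^*$, exactly one, $S\cdot S^*$, lies back in $\mathcal{S}$, and this single relation is enough to pin down the needed off-diagonal block of $\pi(S)$. Note that neither essential-unitarity nor irreducibility of $S$ enters here: the lemma is a general structural fact about dilations of operator systems containing a generator together with the positive element $SS^*$. Those hypotheses will presumably be needed at the next stage, when this coinvariance has to be upgraded — most likely through the presence of the compact operators in $\mathcal{T}$ and some rigidity of $\pi$ on them — into the full unique extension property required for hyperrigidity.
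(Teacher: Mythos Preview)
Your argument is correct and essentially identical to the paper's own proof: both write $\pi(S)$ as a $2\times 2$ block matrix with respect to $K=H\oplus H^\perp$, compare the $(1,1)$ block of $\pi(SS^*)=\pi(S)\pi(S)^*$ with $\rho(SS^*)=\rho(S)\rho(S)^*$, and conclude that the $(1,2)$ block vanishes. Your additional remarks about which hypotheses are actually used and how the lemma feeds into the subsequent argument are accurate and match how the paper deploys the result.
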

\begin{proof}
With respect to the decomposition $K=H\oplus H^\perp$. By assumption we have 
$$\pi(S)=\left( {\begin{array}{cc}
   \rho(S) & X \\
   Y       & Z \\
  \end{array} } \right)$$ 
Note that $X=P_H\pi(S)|_{H^\perp}$. We must prove that $X=0$. By assumption, 
$$\left( {\begin{array}{cc}
   \rho(SS^*) & X_0 \\
   Y_0       & Z_0 \\
  \end{array} } \right)=\pi(SS^*)=\pi(S)\pi(S)^*
  =\left( {\begin{array}{cc}
   \rho(S) & X \\
   Y       & Z \\
  \end{array} } \right)
  \left( {\begin{array}{cc}
   \rho(S)^* & Y^* \\
   X^*     & Z^* \\
  \end{array} } \right). $$ 
We get,  
  
$$\rho(SS^*)=\rho(S)\rho(S)^*+XX^*$$
Therefore, $XX^*=0$, and hence $X=0$. 
\end{proof}

\begin{proposition}\label{singular}
Suppose that $S$ is irreducible and essential unitary and\linebreak $\mathcal{G}=\{S,SS^*\}$. Let $\mathcal{S}$ be a operator system generated by $\mathcal{G}$ and $\mathcal{T}=C^*(\mathcal{G})$. Let $\rho:\mathcal{T}\rightarrow \mathcal{B}(H)$ be a singular representation. Then the restriction $\rho{|_\mathcal{S}}$ has unique extension property. 
\end{proposition}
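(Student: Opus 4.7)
My plan is a Stinespring dilation argument, driven by two consequences of $\rho$ being singular: that $\rho(S)$ is unitary, and that $I-SS^*\in\mathcal{S}$ is annihilated by $\rho$. First I would note that, since $\rho$ kills $\mathcal{K}(H)$ and both $I-S^*S$ and $I-SS^*$ are compact, $U:=\rho(S)$ satisfies $U^*U=UU^*=I$, i.e., it is unitary on $H$. Hence for any UCP map $\phi:\mathcal{T}\to\mathcal{B}(H)$ extending $\rho|_\mathcal{S}$, the Kadison--Schwarz inequality $\phi(aa^*)\geq\phi(a)\phi(a)^*$ is saturated at $a=S$: $\phi(SS^*)=I=UU^*=\phi(S)\phi(S)^*$.

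Fix such a $\phi$ and take a Stinespring dilation $\phi=V^*\pi V$ with $\pi:\mathcal{T}\to\mathcal{B}(K)$ a $*$-representation and $V:H\to K$ an isometry. Identifying $H$ with $VH$, decompose $K=H\oplus H^\perp$. Because $\pi|_\mathcal{S}$ is a dilation of $\rho|_\mathcal{S}$, Lemma \ref{inva} produces the block form
\[
\pi(S)=\begin{pmatrix} U & 0 \\ Y & Z \end{pmatrix}.
\]
The decisive step is to upgrade this to $Y=0$. Since $I-SS^*\in\mathcal{S}$ is a positive compact operator with $\phi(I-SS^*)=\rho(I-SS^*)=0$, the Stinespring formula yields $V^*\pi(I-SS^*)V=0$, and positivity of $\pi(I-SS^*)$ then forces $\pi(I-SS^*)V=0$. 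Writing $\pi(I-SS^*)=I-\pi(S)\pi(S)^*$ in block form and applying it to the column $(h,0)^{\mathrm{T}}\in H\oplus H^\perp$ gives a bottom component $-YU^*h=0$ for every $h\in H$; since $U$ is unitary, this forces $Y=0$.

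Consequently $\pi(S)$ is block-diagonal with $(1,1)$-entry $\rho(S)$; since $\mathcal{T}=C^*(\{S,SS^*\})$ is generated by $S$, every $\pi(a)$ is block-diagonal with $(1,1)$-entry $\rho(a)$, and therefore $\phi(a)=V^*\pi(a)V=\rho(a)$ for every $a\in\mathcal{T}$. This gives $\phi=\rho$ and hence the unique extension property.

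The main technical hinge is the positivity of $I-SS^*$, which is what allows me to pass from $V^*\pi(I-SS^*)V=0$ to $\pi(I-SS^*)V=0$; this is automatic in the motivating situations where $S$ is a contraction (e.g., essentially unitary isometries such as the unilateral shift). If one does not a priori have $\|S\|\leq 1$, a backup is to apply the Lemma \ref{inva} trick to $S^*$ as well, or to work modulo the decomposition of $\pi|_{\mathcal{K}(H)}$ into identity-multiple and zero components, in order to rule out the $Y$-block contribution to $\phi(S^*S)$.
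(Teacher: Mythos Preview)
Your argument is essentially the paper's: both pass to a representation $\pi$ dilating $\rho|_{\mathcal{S}}$, invoke Lemma~\ref{inva} to obtain the lower-triangular block form, and then use that $\rho(S)$ is unitary to force $Y=0$ and hence triviality of the dilation. The paper compresses your explicit computation into the single phrase ``since $\rho(S)$ is unitary, it cannot be dilated to a compression,'' which is just the standard fact that a unitary admits no nontrivial \emph{contractive} dilation---so the positivity hypothesis $I-SS^*\geq 0$ (equivalently $\|S\|\leq 1$) that you rightly flag is equally needed, and equally implicit, in the paper's own proof.
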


\begin{proof}
We will use the fact that a UCP map $\phi '$ has the unique extension property if and only if $\phi '$  is \textit{maximal}, meaning that every UCP map that dilates $\phi '$ contains as a direct summand \cite[Proposition 2.4]{WBA08}. 

Let $K$ be a Hilbert space properly containing $H$. Let $\pi:\mathcal{T}\rightarrow \mathcal{B}(K)$ be a representation such that the restriction $\pi|_{\mathcal{S}}$ is a dilation of $\rho{|_\mathcal{S}}$. To show that the restriction $\rho{|_\mathcal{S}}$ has unique extension property, it is enough to show that the dilation $\pi$ is trivial, that is, $\pi|_{\mathcal{S}}=\rho|_{\mathcal{S}}\oplus\phi$ for some UCP map $\phi$.  

Using the Lemma \ref{inva}, we can decompose $K=H\oplus H^{\perp}$ and write
$$\pi(S)=\left( {\begin{array}{cc}
   \rho(S) & 0 \\
   Y       & Z \\
  \end{array} } \right).$$ 
%Showing that  $\pi|_{\mathcal{S}}$ is a trivial dilation of $\rho{|_\mathcal{S}}$ is equivalent to showing that $Y=0$.

Since $\rho$ is singular, $\rho(S)$ is unitary,
so it cannot be dilated to a compression. Therefore the dilation $\pi$ must be trivial.
  
\end{proof}

\begin{proposition}\label{identity}
Suppose that $S$ is irreducible and essential unitary and\linebreak $\mathcal{G}=\{S,SS^*\}$. Let $\mathcal{S}$ be a operator system generated by $\mathcal{G}$ and $\mathcal{T}=C^*(\mathcal{G})$.  Then the identity representation of $\mathcal{T}$ is a boundary representation for $\mathcal{S}$.
\end{proposition}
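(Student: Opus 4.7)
I would prove this by invoking the standard criterion that an irreducible representation is a boundary representation for $\mathcal{S}$ if and only if its restriction to $\mathcal{S}$ has the unique extension property, which by \cite[Proposition 2.4]{WBA08} (the same criterion used in the proof of Proposition \ref{singular}) is equivalent to maximality of that restriction. Since $S$ is irreducible, the identity representation of $\mathcal{T}$ is itself irreducible, so it suffices to show that every UCP dilation of $\mathrm{id}_\mathcal{T}|_\mathcal{S}$ is trivial. So let $K\supseteq H$ and let $\pi:\mathcal{T}\to\mathcal{B}(K)$ be a representation whose restriction to $\mathcal{S}$ dilates $\mathrm{id}_\mathcal{T}|_\mathcal{S}$; the goal is to show that $\pi$ contains $\mathrm{id}_\mathcal{T}$ as a direct summand, equivalently that $H$ reduces $\pi(\mathcal{T})$.

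By Lemma \ref{inva}, $H$ is coinvariant for $\pi(S)$, so with respect to $K=H\oplus H^\perp$ one can write
$$\pi(S)=\begin{pmatrix} S & 0 \\ Y & Z \end{pmatrix},$$
and the task reduces to showing $Y=0$. Since $\pi$ is a $*$-representation, $\pi(S)$ is a contraction, so the $(1,1)$-block of $\pi(S^*S)=\pi(S)^*\pi(S)\leq I_K$ yields $S^*S+Y^*Y\leq I$, i.e., $Y^*Y\leq I-S^*S$. By essential unitarity of $S$, the operator $I-S^*S$ is compact, so $Y$ itself is a compact operator; in particular $Y=0$ already when $S$ is an isometry (recovering \cite[Theorem 3.3]{WBA11}). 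In general, I would apply Choi's multiplicative domain theorem to the compression $\phi(T):=P_H\pi(T)P_H$: the dilation identity $\phi(SS^*)=SS^*=\phi(S)\phi(S^*)$ places $S$ in the left multiplicative domain of $\phi$ (so $\phi(Sy)=S\phi(y)$) and $S^*$ in the right multiplicative domain (so $\phi(yS^*)=\phi(y)S^*$) for all $y\in\mathcal{T}$.

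To close the gap between these one-sided multiplicative identities and the desired equality $\phi=\mathrm{id}_\mathcal{T}$, I would use the compact structure: decompose $\pi=\pi_e\oplus\pi_s$ on $K=K_e\oplus K_s$ via $\mathcal{K}(H)\subseteq\mathcal{T}$, where $\pi_e$ is a multiple of $\mathrm{id}_\mathcal{T}$ and $\pi_s$ is singular (and is handled automatically by Proposition \ref{singular}). The nonzero compact operator $I-SS^*\in\mathcal{S}$ (or $I-S^*S$, interchanging the roles of $S$ and $S^*$, if $S$ is a coisometry) imposes, via the dilation condition, a sharp constraint on how the inclusion $H\hookrightarrow K$ interacts with the essential-singular decomposition; combined with the multiplicative-domain identities above, this forces $\phi(S^*S)=S^*S$, equivalently $Y^*Y=0$ and hence $Y=0$. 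The main obstacle I anticipate is precisely this last step: propagating the equality $\phi(T)=T$ from $T\in\mathcal{S}$ to $T=S^*S\notin\mathcal{S}$, which requires the full interplay between the multiplicative-domain identities, the presence of nonzero compacts in $\mathcal{S}$, and the irreducibility of $S$ (the latter ensuring that the ideal generated in $\mathcal{T}$ by the given compact is all of $\mathcal{K}(H)$, which rules out any nontrivial singular component in the embedding $H\hookrightarrow K$).
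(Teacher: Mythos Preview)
The paper's proof takes a completely different and much shorter route: it does not verify maximality by hand at all, but invokes Arveson's boundary theorem \cite[Theorem~2.1.1]{WBA72}. Since $S$ is irreducible, $\mathcal{S}$ is an irreducible operator system containing the identity; since $S$ is irreducible and essentially unitary, $\mathcal{K}(H)\subseteq\mathcal{T}$; and the nonzero compact $I-SS^*$ lies in $\mathcal{S}$, so the Calkin quotient $q:\mathcal{B}(H)\to\mathcal{B}(H)/\mathcal{K}(H)$ is not isometric (let alone completely isometric) on $\mathcal{S}$. The boundary theorem then yields the conclusion in one line.

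Your direct dilation argument, by contrast, is incomplete precisely where you flag it. From $\phi(SS^*)=\phi(S)\phi(S)^*$ one obtains only the one-sided identities $\phi(Sb)=S\phi(b)$ and $\phi(bS^*)=\phi(b)S^*$; in the block picture these are already equivalent to the coinvariance of $H$ that you extracted from Lemma~\ref{inva}, so they carry no new information toward $Y=0$. Your proposed essential/singular decomposition of $\pi$ constrains the structure of $K$ but not, without further argument, how $H$ embeds in the essential summand, and nothing in the sketch explains how this combination forces $\phi(S^*S)=S^*S$. There is also an outright error earlier: an essential unitary need not be a contraction (one only has $I-S^*S\in\mathcal{K}(H)$, not $I-S^*S\geq 0$), so $\pi(S)^*\pi(S)\leq I_K$ is unjustified, and with it the inequality $Y^*Y\leq I-S^*S$ and the claimed compactness of $Y$. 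The economical remedy is the one the paper takes: appeal to the boundary theorem rather than attempt a bare-hands verification.
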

\begin{proof}
Since $S$ is irreducible and essential unitary. The unital $C^*$-algebra generated by $\mathcal{G}$ contains the compact operators, that is, $\mathcal{K}(H)\subseteq \mathcal{T}=C^*(\mathcal{G})$. The operator system $\mathcal{S} \subset \mathcal{T}$ is irreducible and contains the identity operator. By our assumption, $0\neq \mathcal{K}=I-SS^*\in \mathcal{S}$ is a compact operator, we have $||\mathcal{K}-\mathcal{K}||<||\mathcal{K}||$. Therefore, the quotient map $q:\mathcal{B}(H)\rightarrow \mathcal{B}(H)/\mathcal{K}(H)$ is not completely isometric on $\mathcal{S}$. Hence by boundary theorem of Arveson \cite[Theorem 2.1.1]{WBA72}, identity representation of  $\mathcal{T}$ is a boundary representation for $\mathcal{S}$.  
\end{proof}

\begin{theorem}\label{hypessuni}
Let $S$ be an irreducible and essential unitary and $\mathcal{G}=\{S,SS^*\}$. Let $\mathcal{T}=C^*(\mathcal{G})$ be the unital $C^*$-algebra generated by $\mathcal{G}$. Then $\mathcal{G}$ is a hyperrigid generator for $\mathcal{T}$.
\end{theorem}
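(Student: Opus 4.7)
The plan is to apply Theorem \ref{hypuep}: to conclude that $\mathcal{G}$ is hyperrigid, I must verify that for every nondegenerate representation $\rho:\mathcal{T}\to\mathcal{B}(K)$ on a separable Hilbert space, the restriction $\rho|_\mathcal{S}$ has the unique extension property. By the criterion already invoked in Proposition \ref{singular} (namely \cite[Proposition 2.4]{WBA08}), this is equivalent to $\rho|_\mathcal{S}$ being a \emph{maximal} UCP map. Propositions \ref{singular} and \ref{identity} take care of two extreme cases---singular representations and the identity representation---so the remaining task is to reduce a general $\rho$ to those.

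My first step would be to decompose $\rho$ using the fact that $\mathcal{T}\supseteq\mathcal{K}(H)$. Setting $K_a=\overline{\rho(\mathcal{K}(H))K}$, the subspace $K_a$ and its orthogonal complement $K_s$ are both $\rho(\mathcal{T})$-invariant, and $\rho$ splits as $\rho=\rho_a\oplus\rho_s$ with $\rho_s$ annihilating $\mathcal{K}(H)$, so $\rho_s$ is a singular representation. On $K_a$ the restriction of $\rho$ to $\mathcal{K}(H)$ is nondegenerate, hence unitarily equivalent to an amplification $\mathrm{id}\otimes 1_{\mathcal{H}_0}$ of the identity representation of $\mathcal{K}(H)$; because $\mathcal{T}$ sits inside the multiplier algebra $M(\mathcal{K}(H))=\mathcal{B}(H)$, this amplification extends uniquely to $\mathcal{T}$, forcing $\rho_a$ to be unitarily equivalent to an amplification of the identity representation of $\mathcal{T}$.

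The two propositions then apply directly: Proposition \ref{singular} gives that $\rho_s|_\mathcal{S}$ is maximal, while Proposition \ref{identity} says the identity representation of $\mathcal{T}$ is a boundary representation for $\mathcal{S}$, so $\mathrm{id}|_\mathcal{S}$ is maximal; since maximality is preserved under amplification by a trivial action on a multiplicity Hilbert space, $\rho_a|_\mathcal{S}$ is maximal as well. To conclude I would invoke the general fact that a direct sum of maximal UCP maps is maximal: for any UCP dilation of $\rho_a\oplus\rho_s$, Lemma \ref{inva} provides the coinvariance needed to compress to the two summand subspaces, producing dilations of $\rho_a|_\mathcal{S}$ and $\rho_s|_\mathcal{S}$; maximality of each summand forces those compressions---and hence the ambient dilation---to split as a direct sum. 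Thus $\rho|_\mathcal{S}$ is maximal, has the unique extension property, and Theorem \ref{hypuep} delivers hyperrigidity of $\mathcal{G}$ for $\mathcal{T}$.

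The most delicate point is the final stability step: that maximality passes to amplifications and to direct sums of maximal maps. Both statements are standard consequences of Arveson's theory of maximal dilations, but articulating them without conflating general UCP dilations with dilations by $*$-representations is the piece that requires the most care. Everything else in the argument is bookkeeping built on top of Propositions \ref{singular} and \ref{identity}, which have already concentrated the operator-theoretic content of the theorem into the treatment of the two extremal cases.
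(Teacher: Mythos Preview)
Your argument follows the paper's proof essentially verbatim: reduce via Theorem \ref{hypuep} to checking the unique extension property for every nondegenerate representation, decompose such a representation as a multiple of the identity representation direct-summed with a singular representation, and invoke Propositions \ref{singular} and \ref{identity} on the two pieces. The only difference is in the ``delicate'' stability step you flag at the end: rather than arguing by hand through maximality and Lemma \ref{inva} that the unique extension property passes to amplifications and direct sums, the paper simply cites \cite[Proposition 4.4]{WBA11}, which states exactly this closure property for the unique extension property and turns your most careful paragraph into a one-line reference.
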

\begin{proof}
Let $\mathcal{S}$ be the operator system generated by $\mathcal{G}$. Note that $\mathcal{G}$ is hyperrigid if and only if $\mathcal{S}$ is hyperrigid. By Theorem \ref{hypuep}, it suffices to show that for every nondegenerate representation $\rho$ of $\mathcal{T}$,  $\rho{|_\mathcal{S}}$ has the unique extension property. 

The Proposition \ref{singular} implies that every singular nondegenerate representation $\pi$ of $\mathcal{T}$, $\pi{|_\mathcal{S}}$ has the unique extension property. By Proposition \ref{identity}, the restriction of the identity representation of $\mathcal{T}$ to $\mathcal{S}$ has the unique extension property. Since every nondegenerate representation of $\mathcal{T}$ splits as the direct sum of a multiple of the identity representation and a singular nondegenerate representation and by \cite[Proposition 4.4]{WBA11} the unique extension property passes to direct sums. Hence every nondegenerate representation of $\mathcal{T}$ restricted to $\mathcal{S}$ has the unique extension property.  
\end{proof}

\begin{example}
Let $H$ be a Hilbert space having an orthonormal basis\linebreak $\{e_n:n\geq 0\}$. The unilateral shift $S$ is defined by $Se_n=e_{n+1}$. The $C^*$-algebra $\mathcal{T}$ generated by $S$ is called the Toeplitz $C^*$-algebra. Observe that $I-S^*S$ and $I-SS^*$ are compact, therefore $S$ is essential unitary. Also, $S$ is irreducible. The Toeplitz $C^*$-algebra $\mathcal{T}$ contains the compact operators $\mathcal{K}(H)$. We know that the set $\{S,SS^*\}$ also generates the Toeplitz $C^*$-algebra $\mathcal{T}$. Hence, by Theorem \ref{hypessuni}, The set $\{S,SS^*\}$ is hyperrigid generator for Toeplitz $C^*$-algebra $\mathcal{T}$. 
\end{example}

\section{Hyperrigid generators}

The main purpose of this section is to find the hyperrigid generators for the $C^*$-algebras generated by a single operator.

\begin{theorem}\label{R1}
Let $T$ be an operator in $\mathcal{B}(H)$ that generate a unital $C^*$-algebra $\mathcal{A}$ and let $\mathcal{G}=\{T,T^*T,TT^*\}$. Then $\mathcal{G}$ is hyperrigid generators for  unital $C^*$-algebra $\mathcal{A}$.
\end{theorem}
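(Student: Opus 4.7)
The plan is to invoke Theorem \ref{hypuep}: it suffices to show that for every nondegenerate representation $\pi:\mathcal{A}\rightarrow\mathcal{B}(H)$, the restriction $\pi|_{\mathcal{S}}$ to the operator system $\mathcal{S}=\mathrm{span}\{I,T,T^*,T^*T,TT^*\}$ generated by $\mathcal{G}$ has the unique extension property. Equivalently, by the maximality criterion of \cite[Proposition 2.4]{WBA08} already used in Proposition \ref{singular}, every UCP map $\phi:\mathcal{A}\rightarrow\mathcal{B}(H)$ with $\phi|_{\mathcal{S}}=\pi|_{\mathcal{S}}$ must coincide with $\pi$.

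So fix such a $\phi$ and pass to its minimal Stinespring dilation $\sigma:\mathcal{A}\rightarrow\mathcal{B}(K)$, identifying $H\subseteq K$ so that $\phi(a)=P_H\sigma(a)|_H$. Write
$$\sigma(T)=\left(\begin{array}{cc} \pi(T) & X \\ Y & Z \end{array}\right)$$
with respect to the decomposition $K=H\oplus H^{\perp}$; the $(1,1)$-entry is $\pi(T)$ because $\phi(T)=\pi(T)$. The heart of the argument is to prove $X=Y=0$. Since $\sigma$ is a $*$-homomorphism, the $(1,1)$-block of $\sigma(T)^*\sigma(T)$ equals $\pi(T)^*\pi(T)+Y^*Y$, while the $(1,1)$-block of $\sigma(T)\sigma(T)^*$ equals $\pi(T)\pi(T)^*+XX^*$. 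On the other hand, because $T^*T,TT^*\in\mathcal{S}$ we have $\phi(T^*T)=\pi(T^*T)=\pi(T)^*\pi(T)$ and $\phi(TT^*)=\pi(TT^*)=\pi(T)\pi(T)^*$, forcing $Y^*Y=0$ and $XX^*=0$, hence $X=Y=0$. This is precisely the step that needs all three elements of $\mathcal{G}$ simultaneously, since only one of $T^*T$ and $TT^*$ would kill only one of the two off-diagonal blocks.

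Once $\sigma(T)$ is block-diagonal, so is $\sigma(T^*)$, hence every $*$-polynomial in $T$; by norm continuity the entire image $\sigma(\mathcal{A})$ decomposes with respect to $H\oplus H^{\perp}$. Consequently $\phi(a)=P_H\sigma(a)|_H=\pi(a)$ for every $a\in\mathcal{A}$, which gives the unique extension property, and Theorem \ref{hypuep} concludes hyperrigidity of $\mathcal{G}$.

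I do not expect a substantive obstacle here: the argument is a direct two-sided refinement of the one-sided computation in Lemma \ref{inva}. The only mild point requiring care is working with a general UCP extension rather than a representation dilation, which is handled transparently by replacing the dilation $\pi$ of Lemma \ref{inva} with the Stinespring representation $\sigma$ of $\phi$ and reading off the same block identities.
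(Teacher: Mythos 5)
Your proposal is correct and takes essentially the same approach as the paper: pass to a (minimal) Stinespring dilation $\sigma$ of the UCP extension $\phi$ and use $\phi(T^*T)=\pi(T)^*\pi(T)$ and $\phi(TT^*)=\pi(T)\pi(T)^*$ to kill the two off-diagonal blocks of $\sigma(T)$, so that $H$ reduces $\sigma(\mathcal{A})$ and agreement on the generator forces $\phi=\pi$. The only cosmetic difference is that the paper phrases the conclusion through minimality of the dilation (showing $VH=K$, hence $\phi$ is a representation), whereas you compress directly to the reducing subspace; the key block computations are identical.
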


\begin{proof}
Let $\mathcal{S}$ be the operator system generated by $\mathcal{G}$. By Theorem \ref{hypuep}, it suffices to show that for every nondegenerate representation $\pi$ of $\mathcal{A}$, $\pi{|_\mathcal{S}}$ has the unique extension property.

Let $\pi:\mathcal{A}\rightarrow \mathcal{B}(H)$ be a nondegenerate representation. Let $\phi:\mathcal{A}\rightarrow \mathcal{B}(H)$ be a UCP map satisfying $\phi(T)=\pi(T), \phi(T^*T)=\pi(T^*T)$ and $\phi(TT^*)=\pi(TT^*)$.  We have to show that $\phi=\pi$ on $\mathcal{A}$.

Using Stinespring theorem, we can express $\phi$ in the form
$$\phi(S)=V^*\sigma(S)V,~~\forall~~ S\in \mathcal{A}.$$
Where $\sigma$ is a representation of $A$ on a Hilbert space $K$, $V:H\rightarrow K$ is an isometry, and which is minimal in the sense that $\overline{\sigma(A)VH}=K$.

We first claim that $\sigma(T)V=V\pi(T)$, We have
$$V^*\sigma(T)^*VV^*\sigma(T)V=\phi(T)^*\phi(T)=\pi(T)^*\pi(T)=\pi(T^*T)$$

Hence,

$$
\begin{array}{rcl}
 V^*\sigma(T)^*(1-VV^*)\sigma(T)V &=& V^*\sigma(T)^*\sigma(T)V-V^*\sigma(T)^*VV^*\sigma(T)V\\
                                  &=& V^*\sigma(T^*T)V-\pi(T)^*\pi(T)\\
                                  &=& \pi(T^*T)-\pi(T^*T)=0.
 
\end{array}
$$

$\sigma(T)$ leaves $VH$ invariant. Therefore $\sigma(T)V=VV^*\sigma(T)V=V\phi(T)=V\pi(T)$.

$$
\begin{array}{rcl}
VV^*\sigma(T)(1_K-VV^*)\sigma(T)^*VV^*&=&VV^*\sigma(T)\sigma(T)^*VV^*\\
                                      &  &-VV^*\sigma(T)VV^*\sigma(T)^*VV^*\\
                                      &=&VV^*\sigma(TT^*)VV^*-V\pi(T)\pi(T)^*V^*\\
                                      &=&V\pi(TT^*)V^*-V\pi(TT^*)V^*=0.
\end{array}
$$

Hence $(1_K-VV^*)\sigma(T)^*VV^*=0$, we conclude that $VH$ is invariant under both $\sigma(T)$ and $\sigma(T)^*$. Since $\mathcal{A}$ is generated by $T$ it follows that $\sigma(\mathcal{A})VH\subseteq VH$. By minimality we must have $VH=K$, which implies that $V$ is unitary and therefore $\phi(S)=V^{-1}\sigma(S)V$ is a representation. Since $\phi$ agrees with $\pi$ on a generating set. Therefore $\phi=\pi$ on $\mathcal{A}$.
\end{proof}

\begin{corollary}
Let $T$ be a normal operator in $\mathcal{B}(H)$ that generate a unital $C^*$-algebra $\mathcal{A}$ and let $\mathcal{G}=\{T, TT^*\}$. Then $\mathcal{G}$ is hyperrigid generator for unital $C^*$-algebra $\mathcal{A}$.
\end{corollary}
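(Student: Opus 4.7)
The plan is to reduce this corollary immediately to Theorem \ref{R1}. Since $T$ is normal, we have the identity $T^*T = TT^*$, so the three-element generating set $\{T, T^*T, TT^*\}$ appearing in Theorem \ref{R1} collapses to the two-element set $\{T, TT^*\} = \mathcal{G}$. In particular, the operator system generated by $\mathcal{G}$ coincides with the operator system generated by $\{T, T^*T, TT^*\}$.

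Next I would verify that the hypothesis of Theorem \ref{R1} is met: namely, that $T$ generates the unital $C^*$-algebra $\mathcal{A}$. This is exactly the assumption of the corollary. Since $\mathcal{G} = \{T, TT^*\}$ contains $T$, the $C^*$-algebra generated by $\mathcal{G}$ is also $\mathcal{A}$ (normality ensures no additional generator is needed beyond $T$ itself). Applying Theorem \ref{R1} then gives that $\{T, T^*T, TT^*\}$ is a hyperrigid generator for $\mathcal{A}$, and by the set equality above, so is $\mathcal{G}$.

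There is no substantive obstacle here; the corollary is a one-line consequence of the theorem combined with the definition of normality. The only thing to be careful about is the trivial bookkeeping that removing the redundant generator $T^*T$ (equal to $TT^*$) does not alter either the operator system or the generated $C^*$-algebra, so all hypotheses carry over verbatim.
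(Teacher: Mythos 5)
Your proof is correct and is exactly the intended argument: the paper states this corollary without proof precisely because normality gives $T^*T = TT^*$, so the generating set of Theorem \ref{R1} collapses to $\mathcal{G}=\{T,TT^*\}$ and the theorem applies verbatim.
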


\begin{corollary}
Let $T$ be an unitary operator in $\mathcal{B}(H)$ that generate a $C^*$-algebra $\mathcal{A}$ and let $\mathcal{G}=\{T\}$. Then $\mathcal{G}$ is hyperrigid generator for $C^*$-algebra $\mathcal{A}$.
\end{corollary}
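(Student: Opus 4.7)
My plan is to derive this corollary directly from Theorem \ref{R1}. The key observation is that when $T$ is unitary we have $T^*T = TT^* = I$, so the three-element generating set of Theorem \ref{R1} collapses to $\{T, T^*T, TT^*\} = \{T, I\}$. Moreover, since $I = T^*T$ lies in the $C^*$-algebra generated by $T$, the algebra $\mathcal{A}$ is automatically unital. Therefore Theorem \ref{R1} applies and shows that $\{T, I\}$ is a hyperrigid generator for $\mathcal{A}$.

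To remove the unit from the generating set, I would invoke the general principle, recorded in the introduction via \cite[Proposition 2.1]{GS17}, that for a unital $C^*$-algebra $\mathcal{A}$ a generating set $\mathcal{G}$ is hyperrigid if and only if $\mathcal{G} \cup \{1\}$ is hyperrigid. Applied with $\mathcal{G} = \{T\}$, this immediately yields that $\{T\}$ is a hyperrigid generator for $\mathcal{A}$. The argument is essentially two lines of bookkeeping and I do not foresee any serious obstacle; all the substantive work is already accomplished inside Theorem \ref{R1}, and unitarity of $T$ is exactly what is needed to trivialize the auxiliary generators $T^*T$ and $TT^*$.
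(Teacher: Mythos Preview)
Your proposal is correct and is precisely the intended derivation: the paper states this result as an immediate corollary of Theorem~\ref{R1} without further proof, and your argument---observing that unitarity collapses $\{T,T^*T,TT^*\}$ to $\{T,I\}$ and then removing the unit via \cite[Proposition~2.1]{GS17}---is exactly the implicit reasoning.
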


\begin{proposition}
Let $V\in \mathcal{B}(H)$ be an isometry (not unitary) that generates a $C^*$-algebra $\mathcal{A}$. Then 
\begin{itemize}
\item[(i)] $\mathcal{G}=\{V,VV^*\}$ is hyperrigid generator for $\mathcal{A}$.
\item[(ii)] The smaller generating set $\mathcal{G}_0=\{V\}$ is not hyperrigid.
\end{itemize}
\end{proposition}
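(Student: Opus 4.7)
For part (i), the plan is simply to invoke Theorem \ref{hypiso} (Arveson's Theorem~3.3), which already establishes that $\{V,VV^*\}$ is hyperrigid for any isometry $V$ generating $\mathcal{A}$; the non-unitarity restriction in the present statement is not used here, so (i) is a direct specialization.

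For part (ii), I would apply Theorem \ref{hypuep} in its contrapositive form: it is enough to produce one nondegenerate representation $\pi$ of $\mathcal{A}$ such that the restriction of $\pi$ to the operator system $\mathcal{S}_0 := \mathrm{span}\{1,V,V^*\}$ generated by $\mathcal{G}_0 = \{V\}$ admits a UCP extension to $\mathcal{A}$ different from $\pi$ itself. The concrete plan is to take $\pi$ to be the identity representation on $H$ and to define a second UCP extension $\phi : \mathcal{A} \to \mathcal{B}(H)$ by the conjugation formula $\phi(X) := V^*XV$.

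With this candidate in hand, the remaining steps are short verifications. First, $\phi$ is UCP: complete positivity is immediate from the single-Kraus form $A^*(\cdot)A$, and $\phi(1) = V^*V = 1$ by the isometry identity. Second, $\phi$ agrees with $\pi$ on the generators of $\mathcal{S}_0$: $\phi(V) = V^*V\cdot V = V$ and $\phi(V^*) = V^*\cdot V^*V = V^*$, both using $V^*V = I$. Third, $\phi \neq \pi$ on $\mathcal{A}$: indeed, $\phi(VV^*) = V^*(VV^*)V = (V^*V)(V^*V) = I$, whereas $\pi(VV^*) = VV^*$, and these differ precisely because $V$ is not unitary. Thus $\pi|_{\mathcal{S}_0}$ has two distinct UCP extensions, so by Theorem \ref{hypuep} the set $\mathcal{G}_0$ is not hyperrigid.

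There is no real obstacle in this argument; the only non-routine move is identifying $\phi(X) = V^*XV$ as the right map. The main observation is that this Stinespring-style compression fixes $1,V,V^*$ pointwise precisely because $V^*V = I$, while it sends $VV^*$ to $I$. The non-unitarity of $V$ enters only at the very end, to ensure $VV^* \neq I$, so that the two UCP extensions are genuinely distinct witnesses to failure of the unique extension property.
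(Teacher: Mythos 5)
Your proposal is correct and follows essentially the same route as the paper: part (i) is quoted from Arveson's Theorem \ref{hypiso}, and part (ii) exhibits the compression $X \mapsto V^*XV$ as a second UCP extension of the identity representation restricted to the operator system generated by $\{V\}$, agreeing on $1, V, V^*$ (since $V^*V = I$) but sending $VV^*$ to $I \neq VV^*$, exactly as in the paper. Nothing further is needed.
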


\begin{proof}
(i) follows from the Theorem \ref{hypiso}. Now we will prove (ii), let $\mathcal{S}$ be the operator system generated by $\mathcal{G}_0$. Let $Id$ denote the identity representation of a $C^*$-algebra $\mathcal{A}$. Let $V^*Id(\cdot)V$ be a completely positive map on the $C^*$-algebra $\mathcal{A}$. We have $V^*IdV{|_\mathcal{S}}=Id{|_\mathcal{S}}$, but
$$V^*Id(VV^*)V=I\neq VV^*=Id(VV^*).$$
This implies that $Id$ representation restricted to $\mathcal{S}$ has two UCP map extensions $V^*IdV$ and $Id$. Therefore the nondegenerate representation $Id{|_\mathcal{S}}$ does not have unique extension property. Using the Theorem \ref{hypuep}, $\mathcal{S}$ is not hyperrigid operator system in a $C^*$-algebra $\mathcal{A}$. This will imply that $\mathcal{G}_0$ is not hyperrigid in $\mathcal{A}$. 
\end{proof}

{\bf{Acknowledgment}:}
The author would like to thank Orr Moshe Shalit for valuable discussions and for a careful reading of this manuscript and some constructive comments. The author would like to thank Douglas Farenick and B. V. Rajarama Bhat for valuable discussions. The author would like to thank Statistics and Mathematics Unit, Indian Statistical Institute, Bangalore, India for providing visiting scientist post doctoral fellowship.

\bibliographystyle{amsplain}

\end{document}